\newtheorem{lma}{Lemma}[section]
\newaliascnt{thmCt}{lma}
\newtheorem{thm}[thmCt]{Theorem}
\newaliascnt{corCt}{lma}
\newtheorem{cor}[corCt]{Corollary}
\newaliascnt{prpCt}{lma}
\newtheorem{prp}[prpCt]{Proposition}
\theoremstyle{definition}
\newaliascnt{pgrCt}{lma}
\newtheorem{pgr}[pgrCt]{}
\newaliascnt{dfnCt}{lma}
\newaliascnt{rmkCt}{lma}
\newtheorem{rmk}[rmkCt]{Remark}
\newaliascnt{exaCt}{lma}
\newtheorem{exa}[exaCt]{Example}
\newaliascnt{qstCt}{lma}
\newtheorem{qst}[qstCt]{Question}
\newaliascnt{pbmCt}{lma}
\newtheorem{pbm}[pbmCt]{Problem}
\def\today{\number\day\space\ifcase\month\or   January\or February\or
   March\or April\or May\or June\or   July\or August\or September\or
   October\or November\or December\fi\   \number\year}
\newcommand{\CC}{{\mathbb{C}}}
\newcommand{\ev}{{\mathrm{ev}}}
\newcommand{\tensProj}{\widehat{\otimes}}
\newcommand{\ca}{$C^*$-algebra}
\newcommand{\weakStar}{weak${}^*$}
\newcommand{\freeVar}{\_\,}
\newcommand{\Bdd}{{\mathcal{L}}}
\newcommand{\Cpct}{{\mathcal{K}}}
\newcommand{\Schatten}{{\mathcal{S}}}
\DeclareMathOperator{\tr}{tr}
\title[Preduals of $\Bdd(H,Y)$]{Preduals for spaces of operators involving Hilbert spaces and trace-class operators}
\date{\today}
\author{Hannes Thiel}
\address{Hannes Thiel. Mathematisches Institut, Universit\"at M\"unster, Einsteinst.~62, 48149 M\"unster, Germany.}
\email{hannes.thiel@uni-muenster.de}
\urladdr{www.math.uni-muenster.de/u/hannes.thiel/}
\thanks{The author was partially supported by the Deutsche Forschungsgemeinschaft (SFB 878 \emph{Groups, Geometry \& Actions}).}
\subjclass[2010]{Primary:
47L05, 
47L10, 
Secondary:
47L45, 
46B20. 
}
\keywords{Preduals, unique predual, Banach algebras, complemented subspace}
\begin{document}

\begin{abstract}
Continuing the study of preduals of spaces $\Bdd(H,Y)$ of bounded, linear maps, we consider the situation that $H$ is a Hilbert space.
We establish a natural correspondence between isometric preduals of $\Bdd(H,Y)$ and isometric preduals of $Y$.

The main ingredient is a Tomiyama-type result which shows that every contractive projection that complements $\Bdd(H,Y)$ in its bidual is automatically a right $\Bdd(H)$-module map.

As an application, we show that isometric preduals of $\Bdd(\Schatten_1)$, the algebra of operators on the space of trace-class operators, correspond to isometric preduals of $\Schatten_1$ itself (and there is an abundance of them).
On the other hand, the compact operators are the unique predual of $\Schatten_1$ making its multiplication separately \weakStar{} continuous.
\end{abstract}

\maketitle

\section{Introduction}

An isometric predual of a Banach space $X$ is a Banach space $F$ together with an isometric isomorphism $X\cong F^*$.
Every predual induces a \weakStar{} topology. 
Due to the importance of \weakStar{} topologies, it is interesting to study the existence and uniqueness of preduals;
see the survey \cite{God89IsometricPredualsSurvey} and the references therein.

Given Banach spaces $X$ and $Y$, let $\Bdd(X,Y)$ denote the space of operators from $X$ to $Y$.
Every isometric predual of $Y$ induces an isometric predual of $\Bdd(X,Y)$:
If $Y\cong F^*$, then $\Bdd(X,Y)\cong (X\hat{\otimes}F)^*$.

\begin{pbm}
\label{pbm:predual}
Find conditions on $X$ and $Y$ guaranteeing that every isometric predual of $\Bdd(X,Y)$ is induced from an isometric predual of $Y$.
\end{pbm}

Given reflexive spaces $X$ and $Y$, Godefroy and Saphar show that $X\tensProj Y^*$ is the strongly unique isometric predual of $\Bdd(X,Y)$; 
see \cite[Proposition~5.10]{GodSap88DualitySpOpsSmoothNorms}.
In particular, in this case every isometric predual $\Bdd(X,Y)$ is induced from $Y$. 

The main result of this paper extends this to the case that $X$ is a Hilbert space $H$ and $Y$ is arbitrary:
Every isometric predual of $\Bdd(H,Y)$ is induced from an isometric predual of $Y$;
see \autoref{prp:mainThm}.
In particular, $\Bdd(H,Y)$ has a (strongly unique) isometric predual if and only if $Y$ does;
see \autoref{prp:predualBHY}.

To obtain these results, we use that isometric preduals of $Y$ naturally correspond to contractive projections $\Bdd(X,Y)^{**}\to\Bdd(X,Y)$ that are right $\Bdd(X)$-module maps and have \weakStar{} closed kernel;
see \cite[Theorem~5.7]{GarThi16arX:PredualsBXY}.
Hence, we are faced with:

\begin{pbm}
\label{pbm:Tomiyama}
Find conditions on $X$ and $Y$ guaranteeing that every contractive projection $\Bdd(X,Y)^{**}\to\Bdd(X,Y)$ is automatically a right $\Bdd(X)$-module map.
\end{pbm}

It was shown by Tomiyama that every contractive projection from a \ca{} $A$ onto a sub-\ca{} $B$ is automatically a $B$-bimodule map.
We therefore consider a positive solution to \autoref{pbm:Tomiyama} a Tomiyama-type result.
Adapting the proof of Tomiyama's result, we obtain a positive solution to \autoref{pbm:Tomiyama} whenever $X$ is a Hilbert space;
see \autoref{prp:condExp}.

In \autoref{sec:predBS1Y}, we show that our results also hold when the Hilbert space $H$ is replaced by the space of trace-class operators $\Schatten_1(H)$.
It follows that isometric preduals of $\Bdd(\Schatten_1(H))$ naturally correspond to isometric preduals of $\Schatten_1(H)$;
see \autoref{exa:predualBS1}.
We note that $\Schatten_1(H)$ - and consequently $\Bdd(\Schatten_1(H))$ - has many different isometric preduals.
On the other hand, we show that the `standard' predual of compact operators is the unique predual of $\Schatten_1(H)$ making its multiplication separately \weakStar{} continuous;
see \autoref{prp:predualTraceClass}.

\subsection*{Acknowledgements}

The author would like to thank Eusebio Gardella and Tim de Laat for valuable feedback.

\subsection*{Notation}

Given Banach spaces $X$ and $Y$, an \emph{operator} from $X$ to $Y$ is a bounded, linear map $X\to Y$.
The space of such operators is denoted by $\Bdd(X,Y)$.
The projective tensor product of $X$ and $Y$ is denoted by $X\hat{\otimes}Y$.
We identify $X$ with a subspace of its bidual, and we let $\kappa_X\colon X\to X^{**}$ denote the inclusion.
A \emph{projection} $\pi\colon X^{**}\to X$ is an operator satisfying $\pi(x)=x$ for all $x\in X$.

\section{Preduals involving Hilbert spaces}
\label{sec:predBHY}

Throughout, $X$ and $Y$ denote Banach spaces.
For conceptual reasons, it is useful to consider preduals of $X$ as subsets of $X^*$.
More precisely, a closed subspace $F\subseteq X^*$ is an (isometric) predual of $X$ if for the inclusion map $\iota_F\colon F\to X^*$, the transpose map $\iota_F^*\colon X^{**}\to F^*$ restricts to an (isometric) isomorphism $X\to F^*$.

The space $X$ is said to have a \emph{strongly unique isometric predual} if there exists an isometric predual $F\subseteq X^*$ and if $F=G$ for every isometric predual $G\subseteq X^*$.
Every reflexive space $X$ has a strongly unique isometric predual, namely $X^*$.

\begin{pgr}
\label{pgr:actions}
The space $\Bdd(X,Y)$ has a natural $\Bdd(Y)$-$\Bdd(X)$-bimodule structure.
Given $a\in\Bdd(X)$, the action of $a$ is given by $R_a\colon\Bdd(X,Y)\to\Bdd(X,Y)$, $R_a(f):=f\circ a$, for $f\in\Bdd(X,Y)$.
Thus, $a$ acts by precomposing on the right of $\Bdd(X,Y)$.
Similarly, the action of $b\in\Bdd(Y)$ is given by postcomposing on the left, that is, by $L_b\colon\Bdd(X,Y)\to\Bdd(X,Y)$, $L_b(f):=b\circ f$, for $f\in\Bdd(X,Y)$.

We obtain a $\Bdd(X)$-$\Bdd(Y)$-bimodule structure on $\Bdd(X,Y)^*$.
The \emph{left} action of $a\in\Bdd(X)$ on $\Bdd(X,Y)^*$ is given by $R_a^*$.
The \emph{right} action of $b\in\Bdd(Y)$ on $\Bdd(X,Y)^*$ is given by $L_b^*$.
Similarly, we obtain a $\Bdd(Y)$-$\Bdd(X)$-bimodule structure on $\Bdd(X,Y)^{**}$. 
\end{pgr}

Given a \ca{} $A$ and $a,b,x,y\in A$ with $a^*b=0$, we have $\|ax+by\|^2\leq\|ax\|^2+\|by\|^2$, which is an analog of Bessel's inequality;
see \cite[II.3.1.12, p.66]{Bla06OpAlgs}.
We first prove two versions of this result in a more general context.

\begin{lma}
\label{prp:BesselLeft}
Let $H$ be a Hilbert space, let $X$ be a Banach space, let $a,b\in\Bdd(H)$ satisfying $a^*b=0$, and let $f,g\in\Bdd(X,H)$.
Then
\[
\|af + bg\|^2 \leq \|af\|^2 + \|bg\|^2.
\]
\end{lma}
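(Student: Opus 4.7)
The plan is to reduce the inequality to the Pythagorean theorem in $H$ by showing that for every $x\in X$, the vectors $af(x)$ and $bg(x)$ are orthogonal in $H$. Concretely, from the hypothesis $a^*b=0$ we obtain $b^*a=(a^*b)^*=0$, hence for all $\xi,\eta\in H$,
\[
\langle a\xi,\, b\eta\rangle = \langle b^*a\xi,\, \eta\rangle = 0.
\]
Applied with $\xi=f(x)$ and $\eta=g(x)$, this gives $af(x)\perp bg(x)$ in $H$.

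Once orthogonality is established, the Pythagorean identity yields, for every $x\in X$,
\[
\|af(x)+bg(x)\|^2 = \|af(x)\|^2 + \|bg(x)\|^2.
\]
Estimating each summand by the operator norm of $af$ and $bg$ respectively, and restricting to $\|x\|\le 1$, gives
\[
\|af(x)+bg(x)\|^2 \le \|af\|^2\|x\|^2 + \|bg\|^2\|x\|^2 \le \|af\|^2 + \|bg\|^2.
\]

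Finally, taking the supremum over the closed unit ball of $X$ yields $\|af+bg\|^2\le\|af\|^2+\|bg\|^2$, as desired. There is no serious obstacle: the only point to verify is the translation of the algebraic identity $a^*b=0$ into orthogonality of the ranges of $a$ and $b$, after which everything follows from elementary Hilbert space geometry and the definition of the operator norm.
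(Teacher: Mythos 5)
Your proof is correct and follows essentially the same route as the paper: both arguments translate $a^*b=0$ into orthogonality of the ranges of $a$ and $b$, apply the Pythagorean identity pointwise to $af(x)$ and $bg(x)$, and take the supremum over the unit ball. Your write-up merely makes the orthogonality computation $\langle a\xi, b\eta\rangle = \langle b^*a\xi,\eta\rangle = 0$ explicit, which the paper leaves implicit.
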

\begin{proof}
The equation $a^*b=0$ implies that the ranges of $a$ and $b$ are orthogonal.
Given $x\in X$, it follows that the elements $afx$ and $bgx$ are orthogonal in $H$, whence
\[
\|afx + bgx\|^2 = \|afx\|^2 + \|bgx\|^2.
\]
Using this at the second step, we deduce that
\[
\|af + bg\|^2
= \sup_{\|x\|\leq 1} \|afx + bgx\|^2
= \sup_{\|x\|\leq 1} \big( \|afx\|^2 + \|bgx\|^2 \big)
\leq \|af\|^2 + \|bg\|^2,
\]
as desired.
\end{proof}

\begin{lma}
\label{prp:BesselRight}
Let $H$ be a Hilbert space, let $Y$ be a Banach space, let $a,b\in\Bdd(H)$ satisfying $ab^*=0$, and let $f,g\in\Bdd(H,Y)$.
Then
\[
\|fa + gb\|^2 \leq \|fa\|^2 + \|gb\|^2.
\]

Similarly, given $F,G\in\Bdd(H,Y)^{**}$, we have $\|Fa + Gb\|^2 \leq \|Fa\|^2 + \|Gb\|^2$.
\end{lma}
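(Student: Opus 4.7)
The plan is to reduce the inequality to a basic orthogonal decomposition of $H$. Taking adjoints, $ab^*=0$ is equivalent to $ba^*=0$, so $\overline{\mathrm{range}(a^*)}\subseteq\ker(b)$ and $\overline{\mathrm{range}(b^*)}\subseteq\ker(a)$. Let $p\in\Bdd(H)$ denote the orthogonal projection onto $\overline{\mathrm{range}(a^*)}$ and set $q:=1-p$, the orthogonal projection onto $\ker(a)$. The two inclusions above yield
\[
a=ap,\qquad b=bq,\qquad pq=qp=0.
\]

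For the first (operator) inequality, I would fix $h\in H$ and write $h_1:=ph$, $h_2:=qh$, so that $h=h_1+h_2$ with $h_1\perp h_2$. Since $ah_2=0$ and $bh_1=0$, we have $(fa+gb)(h)=fa\,h_1+gb\,h_2$. The triangle inequality in $Y$ followed by the Cauchy--Schwarz inequality in $\mathbb{R}^2$ gives
\[
\|(fa+gb)h\|\leq \|fa\|\,\|h_1\|+\|gb\|\,\|h_2\|\leq \sqrt{\|fa\|^2+\|gb\|^2}\,\|h\|,
\]
and taking the supremum over the unit ball of $H$ yields the claim.

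For the bidual version, I would argue by \weakStar{} approximation. From $a=ap$ and $b=bq$, functoriality of $(-)^{**}$ gives $Fa=(Fa)p$ and $Gb=(Gb)q$. By Goldstine's theorem, choose nets $(f_i),(g_j)\subseteq\Bdd(H,Y)$ with $\|f_i\|\leq\|Fa\|$, $\|g_j\|\leq\|Gb\|$, and $f_i\to Fa$, $g_j\to Gb$ in the \weakStar{} topology. Setting $\tilde f_i:=f_i p$ and $\tilde g_j:=g_j q$ does not increase norms; since $R_p^{**}$ and $R_q^{**}$ are \weakStar{} continuous and fix $Fa$ and $Gb$ respectively, we still have $\tilde f_i\to Fa$ and $\tilde g_j\to Gb$ in the \weakStar{} topology. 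Because $p=p^*$, $q=q^*$ and $pq=0$, the hypothesis of the first part of the lemma holds with $p,q$ in place of $a,b$, giving
\[
\|\tilde f_i+\tilde g_j\|^2=\|\tilde f_i p+\tilde g_j q\|^2\leq \|\tilde f_i\|^2+\|\tilde g_j\|^2\leq \|Fa\|^2+\|Gb\|^2.
\]
Passing to the \weakStar{} limit along the product net and using lower semicontinuity of the norm yields the desired bound on $\|Fa+Gb\|$.

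The main obstacle is ensuring that the first part of the lemma actually applies to the Goldstine approximants; the trick is to compose with $p$ and $q$ on the right, so that $\tilde f_i=\tilde f_i p$ and $\tilde g_j=\tilde g_j q$ for a pair of operators $(p,q)$ that automatically satisfies $pq^*=0$, and to use that the right action is \weakStar{} continuous so nothing is lost in the limit.
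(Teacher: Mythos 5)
Your argument is correct. The two halves compare differently with the paper's proof. For the operator inequality, the paper proceeds by dualizing: it passes to transposes, checks $(a^t)^*b^t=0$, and invokes the companion lemma for $\Bdd(X,H)$ with $a^*b=0$ (\autoref{prp:BesselLeft}). You instead argue directly on the domain: the orthogonal decomposition $H=\overline{\operatorname{range}(a^*)}\oplus\ker(a)$ kills the cross terms, and the triangle inequality in $Y$ followed by Cauchy--Schwarz in $\mathbb{R}^2$ and Pythagoras gives the bound. This is more elementary and makes the first half of \autoref{prp:BesselRight} independent of \autoref{prp:BesselLeft}; what it costs is only that the ``left'' and ``right'' versions are no longer derived from one another by a single duality argument. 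For the bidual inequality, your route is essentially the paper's: right support projections (you take $q=1-p$ rather than the support projection of $b$, which is harmless since $\overline{\operatorname{range}(b^*)}\subseteq\ker(a)$), Goldstine nets, \weakStar{} continuity of $R_p^{**}$ and $R_q^{**}$, the first part applied to the pair $(p,q)$, and \weakStar{} lower semicontinuity of the norm. In fact your observation that $\|f_ip\|\leq\|f_i\|\leq\|Fa\|$ suffices is a mild simplification of the paper's $\varliminf/\varlimsup$ sandwich establishing $\lim_i\|f_ip\|=\|Fa\|$, which is more than is needed.
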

\begin{proof}
We denote the transpose of an operator $h$ by $h^t$, to distinguish it from the adjoint of an operator in $\Bdd(H)$.
We have $a^t,b^t\in\Bdd(H^*)$ and $f^t,g^t\in\Bdd(Y^*,H^*)$.
Further, $(fa)^t=a^tf^t$, where $a^tf^t$ is given by the left action of $\Bdd(H^*)$ on $\Bdd(Y^*,H^*)$.
It follows from $ab^*=0$ that $ba^*=0$.
We have $(a^*)^t=(a^t)^*$ in $\Bdd(H^*)$, and therefore
\[
(a^t)^*b^t = (a^*)^tb^t=(ba^*)^t=0.
\]

Applying \autoref{prp:BesselLeft} at the third step, we compute
\[
\|fa + gb\|^2
= \|(fa + gb)^t\|^2
= \|a^tf^t+b^tg^t\|^2
\leq \|a^tf^t\|^2 + \|b^tg^t\|^2
= \|fa\|^2 + \|gb\|^2.
\]

Let us show the second inequality.
Let $p$ and $q$ be the right support projections of $a$ and $b$ in $\Bdd(H)$, respectively.
Then $ap=a$, $bq=b$, and $pq^*=0$.
It follows that $Fa=Fap$ and $Gb=Gbq$.

Using Goldstine's theorem, we choose nets $(f_i)_i$ and $(g_j)_j$ in $\Bdd(H,Y)$ such that $(f_i)_i$ converges \weakStar{} to $Fa$, such that $(g_j)_j$ converges \weakStar{} to $Gb$, and such that $\|f_i\|\leq\|Fa\|$ for all $i$ and $\|g_j\|\leq\|Gb\|$ for all $j$.
Then $(f_ip)_i$ converges \weakStar{} to $Fap$.
Using this at the second step, we deduce that
\[
\|Fa\| = \|Fap\| \leq \varliminf_i \|f_ip\| \leq \varlimsup_i \|f_ip\| \leq \|Fa\|,
\]
and hence $\lim_i \|f_ip\| = \|Fa\|$.
Analogously, we obtain that $\lim_j \|g_jq\| = \|Gb\|$.

Using this at the third step, using that the net $(f_ip+g_jq)_{i,j}$ converges \weakStar{} to $Fa+Gb$ at the first step, and using the first inequality of this lemma at the second step, we deduce that
\[
\|Fa + Gb\|^2
\leq \varliminf_{i,j} \|f_ip + g_jq\|^2
\leq \varliminf_{i,j} \left( \|f_ip\|^2 + \|g_jq\|^2 \right)
= \|Fa\|^2 + \|Gb\|^2. \qedhere
\]
\end{proof}

Let $A$ be a \ca{} and let $B\subseteq A$ be a sub-\ca{}.
By Tomiyama's theorem, every contractive projection $\pi\colon A\to B$ is automatically a $B$-bimodule map (called a conditional expectation). 
The next result is in the same spirit. 
It provides a partial positive solution to \autoref{pbm:Tomiyama}.

\begin{thm}
\label{prp:condExp}
Let $H$ be a Hilbert space, and let $Y$ be a Banach space.
Then every contractive projection $\pi\colon\Bdd(H,Y)^{**}\to\Bdd(H,Y)$ is automatically a right $\Bdd(H)$-module map, that is, $\pi(Fa)=\pi(F)a$ for every $F\in\Bdd(H,Y)^{**}$ and $a\in\Bdd(H)$.
\end{thm}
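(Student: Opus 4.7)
My plan is to imitate Tomiyama's classical argument for contractive projections onto sub-$C^*$-algebras, substituting \autoref{prp:BesselRight} for the $C^*$-algebraic Bessel inequality. First I would prove the desired identity $\pi(Fp) = \pi(F)p$ when $p$ is an orthogonal projection in $\Bdd(H)$; then I would extend it to arbitrary $a \in \Bdd(H)$ using the spectral theorem and linearity.

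For the projection case, fix $F \in \Bdd(H,Y)^{**}$ and an orthogonal projection $p \in \Bdd(H)$, set $g := \pi(Fp)(1-p) \in \Bdd(H,Y)$, and aim to show $g = 0$. The key trick is to apply $\pi$ to the perturbation $Fp + ng$ for $n \in \NN$. Since $\pi$ is a linear projection and $g \in \Bdd(H,Y)$, the image equals $\pi(Fp) + ng$, so contractivity of $\pi$ yields
\[
\|\pi(Fp) + ng\| \leq \|Fp + ng\|.
\]
Writing the right-hand side as $\|F\cdot p + n\pi(Fp)\cdot(1-p)\|$ and using that $p(1-p)^* = 0$, \autoref{prp:BesselRight} produces the upper bound $(\|Fp\|^2 + n^2\|g\|^2)^{1/2}$. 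On the other hand, right-multiplying $\pi(Fp) + ng$ by $1-p$ and using $g(1-p) = g$ recovers $(1+n)g$, which gives the lower bound $(1+n)\|g\| \leq \|\pi(Fp) + ng\|$ (the case $p = 1$ being trivial). Squaring and comparing forces $(1+2n)\|g\|^2 \leq \|Fp\|^2$ for every $n$, so $g = 0$ and hence $\pi(Fp) = \pi(Fp)p$. The symmetric argument applied to $F(1-p)$ in place of $Fp$ yields $\pi(F(1-p))p = 0$. Adding these two and right-multiplying $\pi(F) = \pi(Fp) + \pi(F(1-p))$ by $p$ then gives $\pi(F)p = \pi(Fp)$.

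The extension from projections to arbitrary $a \in \Bdd(H)$ I expect to be routine. By linearity, the identity passes to finite complex linear combinations of orthogonal projections. By the spectral theorem, every self-adjoint $a \in \Bdd(H)$ is a norm-limit of such combinations; since $\pi$ is contractive and the right action of $\Bdd(H)$ on $\Bdd(H,Y)^{**}$ and on $\Bdd(H,Y)$ is norm-continuous in the acting element, the identity extends to all self-adjoint $a$. Finally, decomposing an arbitrary $a \in \Bdd(H)$ into its self-adjoint and skew-adjoint parts handles the general case.

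The delicate step is the projection case — locating the right auxiliary element $g$ and perturbation $Fp + ng$ so that the Bessel-type bound from \autoref{prp:BesselRight} grows only like $\sqrt{n^2\|g\|^2 + C}$ in the critical direction, while contractivity of $\pi$ still forces a lower bound of the form $(1+n)\|g\|$. Balancing these two estimates across all $n \in \NN$ is exactly where the orthogonality condition $p(1-p)^* = 0$ is essential; once this Tomiyama-style squeeze is in place, the subsequent reduction via spectral theory is entirely standard.
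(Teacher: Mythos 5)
Your argument is correct and follows essentially the same route as the paper: the projection case is the identical Tomiyama-style squeeze via \autoref{prp:BesselRight}, with your perturbation $Fp+ng$ and subsequent right-multiplication by $1-p$ playing exactly the role of the paper's estimate on $\pi\big(Fp+\lambda\pi(Fp)q\big)q$. The only divergence is the final reduction to general $a\in\Bdd(H)$: the paper invokes the Pearcy--Topping theorem that every operator on a Hilbert space is an exact finite linear combination of projections, whereas you use spectral approximation of self-adjoint operators by combinations of spectral projections together with norm-continuity of $a\mapsto\pi(Fa)$ and $a\mapsto\pi(F)a$ --- both are valid.
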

\begin{proof}
First, we show the result for the case that $a$ is a projection.
Let $p\in\Bdd(H)$ satisfy $p=p^2=p^*$, and set $q:=1-p$.
The following argument is adapted from the proof of Tomiyama's theorem in \cite[Theorem~II.6.10.2, p.132]{Bla06OpAlgs}.
Let $\lambda>0$.
We have $\pi( \pi(Fp)q )q= \pi(Fp)q$.
Using this at the first step, using that $\|\pi\|\leq 1$ and $\|q\|\leq 1$ at the third step, and using \autoref{prp:BesselRight} at the fourth step, we get
\begin{align*}
(1+\lambda)^2\|\pi(Fp)q\|^2
&= \left\| \pi(Fp)q + \lambda \pi\big( \pi(Fp)q \big)q \right\|^2
= \left\| \pi\big( Fp + \lambda \pi(Fp)q \big)q  \right\|^2 \\
&\leq \| Fp + \lambda \pi(Fp)q \|^2
\leq \|Fp\|^2  + \| \lambda \pi(Fp)q \|^2 \\
&= \|Fp\|^2  + \lambda^2 \| \pi(Fp)q \|^2.
\end{align*}

It follows that
\[
(1+2\lambda)\|\pi(Fp)q\|^2
\leq \| Fp \|^2.
\]

Since this holds for every $\lambda>0$, we deduce that $\pi(Fp)q=0$.
Adding $\pi(Fp)p$ to this equation, we obtain
\[
\pi(Fp)=\pi(Fp)p.
\]

Switching the place of $p$ and $q$ in the above argument, we get $\pi(Fq)p=0$.
Adding $\pi(Fp)p$, we get $\pi(F)p=\pi(Fp)p$.
We deduce that
\[
\pi(Fp)=\pi(Fp)p=\pi(F)p.
\]

Finally, we use that every operator on a Hilbert space is a linear combination of finitely many projections;
see \cite[Corollary~2.3]{PeaTop67SumsIdemp}.
\end{proof}

\begin{pgr}
\label{pgr:alpha}
Given $x\in X$, we let $\ev_x\colon\Bdd(X,Y)\to Y$ denote the evaluation map, given by $\ev_x(f):=f(x)$, for $f\in\Bdd(X,Y)$.
Let $\alpha_{X,Y}\colon\Bdd(X,Y)^{**}\to\Bdd(X,Y^{**})$ be the operator introduced in \cite[Definition~3.16]{GarThi16arX:PredualsBXY}.
We have
\[
\alpha_{X,Y}(F)(x)=\ev_x^{**}(F),
\]
for all $F\in\Bdd(X,Y)^{**}$ and $x\in X$;
see \cite[Lemma~3.18]{GarThi16arX:PredualsBXY}.
The map $\alpha_{X,Y}$ is always a contractive, right $\Bdd(X)$-module map.

Let $\pi\colon Y^{**}\to Y$ be a projection. 
Define $\pi_*\colon\Bdd(X,Y^{**})\to\Bdd(X,Y)$ by $\pi_*(f):=\pi\circ f$, for $f\in\Bdd(X,Y^{**})$.
Set $r_\pi:=\pi_*\circ\alpha_{X,Y}\colon\Bdd(X,Y)^{**}\to\Bdd(X,Y)$.
Note that
\[
r_\pi(F)(x) = \pi(\ev_x^{**}(F)),
\]
for $F\in\Bdd(X,Y)^{**}$ and $x\in X$.
The map $r_\pi$ was considered in \cite[Section~4]{GarThi16arX:PredualsBXY}, where it is shown that $r_\pi$ is a right $\Bdd(X)$-module projection.
\end{pgr}

Recall that (concrete) isometric preduals of $Y$ are in natural bijection with contractive projections $Y^{**}\to Y$ that have \weakStar{} closed kernel;
see for example \cite[Proposition~2.7]{GarThi16arX:PredualsBXY}.
Every predual induces a \weakStar{} topology.
A predual of $\Bdd(X,Y)$ makes the right action by each element from $\Bdd(X)$ \weakStar{} continuous if and only if the associated projection $\Bdd(X,Y)^{**}\to\Bdd(X,Y)$ is a right $\Bdd(X)$-module map;
see for example \cite[Proposition~B.6]{GarThi16arX:PredualsBXY}.
The following result is contained in \cite[Theorem~4.7]{GarThi16arX:PredualsBXY}.

\begin{thm}
\label{prp:corrProjYProjBXY}
Let $X$ and $Y$ be Banach spaces with $X\neq\{0\}$.
Given a projection $\pi\colon Y^{**}\to Y$, let $r_\pi\colon\Bdd(X,Y)^{**}\to\Bdd(X,Y)$ be as in \autoref{pgr:alpha}.
This assignment defines a natural bijection between the following classes:
\begin{enumerate}
\item
Contractive projections $Y^{**}\to Y$;
\item
Contractive, right $\Bdd(X)$-module projections $\Bdd(X,Y)^{**}\to\Bdd(X,Y)$.
\end{enumerate}
Moreover, the kernel of $\pi$ is \weakStar{} closed if and only if the kernel of $r_\pi$ is.
Thus, the above correspondence restricts to a natural bijection between isometric preduals of $Y$ and isometric preduals of $\Bdd(X,Y)$ that make the right action by $\Bdd(X)$ \weakStar{} continuous.
\end{thm}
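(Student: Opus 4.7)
My plan is to check that $\pi \mapsto r_\pi$ takes values in the stated class, construct an explicit inverse, and then match kernels. Well-definedness reduces to the factorization $r_\pi = \pi_* \circ \alpha_{X,Y}$ recorded in \autoref{pgr:alpha}: since $\alpha_{X,Y}$ is a contractive right $\Bdd(X)$-module map, and $\pi_*$ is contractive and visibly commutes with the right $\Bdd(X)$-action, $r_\pi$ inherits both properties. The projection identity $r_\pi \circ \kappa = \mathrm{id}$ follows from $\alpha_{X,Y}(\kappa(f)) = \kappa_Y \circ f$ together with $\pi \circ \kappa_Y = \mathrm{id}_Y$.

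To build the inverse I would use $X \neq \{0\}$ to fix $x_0 \in X$ of norm one and, via Hahn--Banach, $\varphi \in X^*$ of norm one with $\varphi(x_0) = 1$. This gives an isometric section $\sigma \colon Y \to \Bdd(X,Y)$, $\sigma(y) := \varphi \otimes y$, of the evaluation $\ev_{x_0}$, together with a rank-one idempotent $p := x_0 \otimes \varphi \in \Bdd(X)$ satisfying $R_p = \sigma \circ \ev_{x_0}$. Given a contractive right $\Bdd(X)$-module projection $P$, I would set
\[
\pi := \ev_{x_0} \circ P \circ \sigma^{**} \colon Y^{**} \longrightarrow Y.
\]
The critical observation is that $\sigma^{**}(Y^{**})$ is precisely the fixed-point set of $R_p^{**}$; the module property of $P$ then forces $P(\sigma^{**}(y^{**}))\cdot p = P(\sigma^{**}(y^{**}))$, placing this element in $R_p(\Bdd(X,Y)) = \sigma(Y)$, on which $\ev_{x_0}$ is the isometric inverse of $\sigma$. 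This shows $\pi$ is contractive, and $\pi$ is a projection because $\sigma^{**} \circ \kappa_Y = \kappa \circ \sigma$ and $P \circ \kappa = \mathrm{id}$.

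To verify $r_\pi = P$, for each $x \in X$ I would form $a_x := x \otimes \varphi \in \Bdd(X)$, which satisfies $R_{a_x} = \sigma \circ \ev_x$ and hence $F \cdot a_x = \sigma^{**}(\ev_x^{**}(F))$. The module property gives
\[
P(\sigma^{**}(\ev_x^{**}(F))) = P(F \cdot a_x) = P(F) \cdot a_x = \sigma(P(F)(x));
\]
composing with $\ev_{x_0}$ yields $\pi(\ev_x^{**}(F)) = P(F)(x)$, that is, $r_\pi(F) = P(F)$. The same calculation shows that $\pi$ is determined by $r_\pi$, so the assignment is injective.

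For the kernel correspondence, the identities above produce $\ker(r_\pi) = \bigcap_{x \in X}(\ev_x^{**})^{-1}(\ker(\pi))$ and $\ker(\pi) = (\sigma^{**})^{-1}(\ker(P))$. Each of $\ev_x^{**}$ and $\sigma^{**}$ is weak*--weak* continuous as the bidual of a bounded operator, so the two kernels are weak* closed simultaneously. Combined with the correspondences between isometric preduals and contractive projections with weak*-closed kernel (and between preduals making the right $\Bdd(X)$-action weak*-continuous and module projections) recalled just before the theorem, this promotes the bijection of projections to the bijection of preduals. The main place I expect to need care is ensuring the module property of $P$ is invoked exactly where needed to force $P \circ \sigma^{**}$ to land inside $\sigma(Y)$; everything else is unwinding definitions.
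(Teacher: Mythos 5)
Your proof is correct, but note that the paper itself does not prove this theorem: it is stated with the remark that it ``is contained in \cite[Theorem~4.7]{GarThi16arX:PredualsBXY}'', so you have supplied a complete argument where the paper only gives a citation. Your construction is the natural one and all the steps check out: the rank-one idempotent $p=x_0\otimes\varphi$ with $R_p=\sigma\circ\ev_{x_0}$, the identification of $\sigma^{**}(Y^{**})$ with the fixed points of $R_p^{**}$, and the operators $a_x=x\otimes\varphi$ with $R_{a_x}=\sigma\circ\ev_x$ are exactly what is needed to show that $\pi\mapsto r_\pi$ is bijective onto the contractive right $\Bdd(X)$-module projections, and the kernel identities $\ker(r_\pi)=\bigcap_x(\ev_x^{**})^{-1}(\ker\pi)$ and $\ker(\pi)=(\sigma^{**})^{-1}(\ker r_\pi)$, together with the \weakStar{}--\weakStar{} continuity of biduals of bounded operators, give the predual statement via the correspondences recalled before the theorem. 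Two small presentational points: contractivity of $\pi=\ev_{x_0}\circ P\circ\sigma^{**}$ is already immediate from composing three contractions, so the fixed-point argument is not needed for that claim --- its real role is to show $P\circ\sigma^{**}$ lands in $\sigma(Y)$, which is what makes $\ker(\pi)=(\sigma^{**})^{-1}(\ker P)$ an equality rather than just an inclusion; and you do not address the word ``natural'' in the statement (independence of the bijection from the choice of $x_0$ and $\varphi$ follows from your formula $\pi(\ev_x^{**}(F))=P(F)(x)$, which determines $\pi$ without reference to them, and is worth a sentence).
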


Combining Theorems~\ref{prp:condExp} and~\ref{prp:corrProjYProjBXY}, we obtain the main result of this paper:

\begin{thm}
\label{prp:mainThm}
Let $H$ be a Hilbert space with $H\neq\{0\}$, and let $Y$ be a Banach space.
Assigning to a projection $\pi\colon Y^{**}\to Y$ the projection $r_\pi\colon\Bdd(H,Y)^{**}\to\Bdd(H,Y)$, as in \autoref{prp:corrProjYProjBXY}, establishes a natural bijection between the following classes.
\begin{enumerate}
\item
Contractive projections $Y^{**}\to Y$;
\item
Contractive projections $\Bdd(H,Y)^{**}\to\Bdd(H,Y)$.
\end{enumerate}

Restricted to projections with \weakStar{} closed kernel, we obtain a natural bijection between isometric preduals of $Y$ and isometric preduals of $\Bdd(H,Y)$.
\end{thm}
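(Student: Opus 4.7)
The plan is to derive this theorem by combining \autoref{prp:condExp} with \autoref{prp:corrProjYProjBXY} applied to $X=H$. Since $H\neq\{0\}$, \autoref{prp:corrProjYProjBXY} already furnishes a natural bijection, implemented by $\pi\mapsto r_\pi$, between contractive projections $Y^{**}\to Y$ and contractive, \emph{right $\Bdd(H)$-module} projections $\Bdd(H,Y)^{**}\to\Bdd(H,Y)$. So all the nontrivial content lies in removing the module-map hypothesis from class (2).

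To do that, I would invoke \autoref{prp:condExp}, which asserts that when the domain is a Hilbert space, every contractive projection $\Bdd(H,Y)^{**}\to\Bdd(H,Y)$ is automatically a right $\Bdd(H)$-module map. Consequently, the subclass to which \autoref{prp:corrProjYProjBXY} applies coincides with the class of \emph{all} contractive projections $\Bdd(H,Y)^{**}\to\Bdd(H,Y)$, and the first asserted bijection follows immediately.

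For the second assertion, I would use the additional statement in \autoref{prp:corrProjYProjBXY} that $\pi$ has \weakStar{} closed kernel if and only if $r_\pi$ does. Combined with the standard correspondence between isometric preduals of a Banach space and contractive projections of its bidual onto itself with \weakStar{} closed kernel (recalled just before \autoref{prp:corrProjYProjBXY}), the bijection restricts to one between isometric preduals of $Y$ and isometric preduals of $\Bdd(H,Y)$.

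Since the substantive work has been packaged entirely into \autoref{prp:condExp} and \autoref{prp:corrProjYProjBXY}, there is no genuine obstacle remaining at this stage; the argument is essentially a bookkeeping step. The real difficulty was the Tomiyama-type result, which was already addressed in the proof of \autoref{prp:condExp} using \autoref{prp:BesselRight} and the fact that every bounded operator on a Hilbert space is a finite linear combination of projections.
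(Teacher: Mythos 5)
Your proposal matches the paper exactly: the paper offers no separate proof of \autoref{prp:mainThm} beyond the remark that it follows by combining \autoref{prp:condExp} (which removes the right $\Bdd(H)$-module hypothesis) with \autoref{prp:corrProjYProjBXY}, which is precisely your argument. The bookkeeping for the \weakStar{}-closed-kernel refinement is also handled the same way.
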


\begin{cor}
\label{prp:predualBHY}
Let $H$ be a Hilbert space with $H\neq\{0\}$, and let $Y$ be a Banach space.
Then $\Bdd(H,Y)$ is $1$-complemented in its bidual if and only if $Y$ is.
Further, $\Bdd(H,Y)$ has an isometric predual if and only if $Y$ does.
Moreover, if $\Bdd(H,Y)$ has a strongly unique isometric predual if and only if $Y$ does.
\end{cor}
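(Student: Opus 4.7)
The plan is to derive all three equivalences directly from \autoref{prp:mainThm}, which has already set up a natural bijection $\pi \leftrightarrow r_\pi$ between contractive projections $Y^{**}\to Y$ and contractive projections $\Bdd(H,Y)^{**}\to\Bdd(H,Y)$, restricting to a bijection between isometric preduals on the two sides. The statement of the corollary is essentially a translation of these bijection statements into the standard language of $1$-complementation, predual existence, and strong uniqueness.

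For the first equivalence, I would recall that a Banach space $X$ is $1$-complemented in $X^{**}$ precisely when there exists a contractive projection $X^{**}\to X$. Thus the first part of \autoref{prp:mainThm} directly yields: $Y$ is $1$-complemented in $Y^{**}$ if and only if $\Bdd(H,Y)$ is $1$-complemented in $\Bdd(H,Y)^{**}$. For the second equivalence, I would use the characterization recalled in the paragraph before \autoref{prp:corrProjYProjBXY}: isometric preduals of $X$ correspond bijectively to contractive projections $X^{**}\to X$ with \weakStar{} closed kernel. The second assertion of \autoref{prp:mainThm} then immediately gives the equivalence of existence of isometric preduals.

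For the third equivalence, I would argue that the bijection of \autoref{prp:mainThm}, restricted to projections with \weakStar{} closed kernel, is in fact a \emph{bijection between the classes of isometric preduals} (viewed as subspaces, respectively, of $Y^*$ and of $\Bdd(H,Y)^*$). Explicitly, if $F\subseteq Y^*$ is an isometric predual corresponding to the projection $\pi\colon Y^{**}\to Y$, then $F=\ker(\pi)_\perp$; similarly, the predual of $\Bdd(H,Y)$ attached to $r_\pi$ is $\ker(r_\pi)_\perp$. Hence the assignment $F\mapsto \ker(r_\pi)_\perp$ is an injection from isometric preduals of $Y$ to isometric preduals of $\Bdd(H,Y)$, and every isometric predual of $\Bdd(H,Y)$ arises this way. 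Strong uniqueness on either side therefore transfers to strong uniqueness on the other: if $Y$ has a unique isometric predual, any two isometric preduals of $\Bdd(H,Y)$ come from isometric preduals of $Y$ that must coincide, hence the two preduals of $\Bdd(H,Y)$ coincide; the converse direction is symmetric.

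I do not anticipate a substantive obstacle, since the corollary is formally an unpacking of \autoref{prp:mainThm} through standard dictionary between projections and preduals. The only subtlety worth writing out carefully is the third part, where one must verify that the bijection of \autoref{prp:mainThm} is truly an equality-preserving correspondence between subspaces of the respective duals (so that ``unique as a subspace'' matches on both sides), rather than just a bijection of isomorphism classes. This is immediate from the explicit description $F=\ker(\pi)_\perp$ together with the fact that distinct contractive projections with \weakStar{} closed kernel have distinct kernels, hence distinct pre-annihilators.
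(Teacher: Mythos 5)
Your proposal is correct and matches the paper's (implicit) argument: the corollary is stated without proof precisely because it is the unpacking of \autoref{prp:mainThm} that you describe, translating the bijection of contractive projections (resp.\ of projections with \weakStar{} closed kernel, i.e.\ of concrete isometric preduals) into the language of $1$-complementation, existence, and strong uniqueness. Your care on the third point --- that the correspondence is a genuine bijection of preduals as subspaces of the respective duals, so that uniqueness transfers --- is exactly the right subtlety to flag.
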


\begin{cor}
\label{prp:predHilbWCts}
Let $H$ be a Hilbert space, let $Y$ be a Banach space, and let $F\subseteq\Bdd(H,Y)^*$ be an isometric predual.
Then for each $a\in\Bdd(H)$, the right action of $a$ on $\Bdd(H,Y)$ is continuous for the \weakStar{} topology induced by $F$.
\end{cor}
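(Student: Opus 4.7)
The plan is to unwind the correspondence between preduals and projections and invoke the Tomiyama-type result (\autoref{prp:condExp}) already established. Given the isometric predual $F\subseteq\Bdd(H,Y)^*$, the natural correspondence recalled just before \autoref{prp:corrProjYProjBXY} associates to $F$ a contractive projection $\pi_F\colon\Bdd(H,Y)^{**}\to\Bdd(H,Y)$ whose kernel is the \weakStar{} closed annihilator $F^\perp$. By \autoref{prp:condExp}, every such contractive projection is automatically a right $\Bdd(H)$-module map.

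Next, I invoke the other direction of the same correspondence. The statement quoted just before \autoref{prp:corrProjYProjBXY} (from \cite[Proposition~B.6]{GarThi16arX:PredualsBXY}) says that an isometric predual of $\Bdd(X,Y)$ makes the right action of each element of $\Bdd(X)$ \weakStar{} continuous precisely when the associated projection is a right $\Bdd(X)$-module map. Applying this with $X=H$ to the projection $\pi_F$, which we just showed is a right $\Bdd(H)$-module map, we conclude that for each $a\in\Bdd(H)$ the map $R_a\colon\Bdd(H,Y)\to\Bdd(H,Y)$ is continuous for the \weakStar{} topology induced by $F$.

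There is no real obstacle here, since the entire technical content has already been absorbed into \autoref{prp:condExp}; the corollary is simply the translation of that module-map property into the predual language via \autoref{prp:corrProjYProjBXY}. The only minor point to check is that the correspondence between preduals and projections with \weakStar{} closed kernel does not require any further hypothesis on $H$ or $Y$, which is the case. Thus the proof reduces to two lines invoking the two results.
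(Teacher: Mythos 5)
Your argument is exactly the intended one: the paper leaves this corollary without an explicit proof precisely because it follows by combining \autoref{prp:condExp} (the associated contractive projection is automatically a right $\Bdd(H)$-module map) with the equivalence from \cite[Proposition~B.6]{GarThi16arX:PredualsBXY} recalled just before \autoref{prp:corrProjYProjBXY}. Your proposal is correct and matches the paper's route.
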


\begin{rmk}
There is a canonical isometric isomorphism $(X\hat{\otimes}Y)^* \cong \Bdd(X,Y^*)$.
Given an isometric isomorphism $Y\cong F^*$, we obtain isometric isomorphisms
\[
\Bdd(X,Y) \cong \Bdd(X,F^*) \cong (X\tensProj F)^*.
\]
Hence, $X\tensProj F$ is an isometric predual of $\Bdd(X,Y)$.
Given a Hilbert space $H$, \autoref{prp:mainThm} states that every isometric predual of $\Bdd(H,Y)$ occurs this way.
In particular, given an isometric isomorphism $\Bdd(H,Y)\cong G^*$ for some Banach space $G$, there is an isometric isomorphism $H\hat{\otimes} F \cong G$, for some isometric predual $F$ of $Y$.
\end{rmk}

\begin{rmk}
By \cite[Proposition~5.10]{GodSap88DualitySpOpsSmoothNorms}, $X\hat{\otimes}F$ is the strongly unique isometric predual of $\Bdd(X,F^*)$ if $X$ and $F$ satisfy the Radon Nikod\'{y}m property (RNP).
Every reflexive space (in particular, every Hilbert space) satisfies the RNP.
Thus, if $Y$ has an isometric predual satisfying the RNP, then \autoref{prp:predualBHY} follows from \cite{GodSap88DualitySpOpsSmoothNorms}.
However, not every Banach spaces with strongly unique isometric predual occurs as the dual of a space satisfying the RNP;
see \autoref{exa:predualBHM}.
\end{rmk}

\begin{exa}
\label{exa:predualBHM}
Let $M$ be a von Neumann algebra.
By Sakai's theorem, $M$ has a strongly unique isometric predual, denoted by $M_*$.
It follows from \autoref{prp:mainThm} that $H\hat{\otimes}M_*$ is the strongly unique isometric predual of $\Bdd(H,M)$.
By \cite[Theorem~4]{Chu81ScatteredRNP}, $M_*$ satisfies the RNP if and only if $M$ is a direct sum of type~$\mathrm{I}$ factors.

Let $\mathcal{R}$ denote the hyperfinite $\mathrm{II}_1$-factor.
Then $\mathcal{R}_*$ does not have the RNP, yet $H\hat{\otimes}\mathcal{R}_*$ is the strongly unique isometric predual of $\Bdd(H,\mathcal{R})$.
\end{exa}

\begin{qst}
\label{qst:analog}
Does \autoref{prp:mainThm} hold when the Hilbert space is replaced by a general Banach space satisfying the RNP?
More modestly, if $X$ is an $L^p$-space, do isometric preduals of $\Bdd(X,Y)$ correspond to isometric preduals of $Y$?
\end{qst}

\begin{rmk}
\label{rmk:Tomiyama}
Note that every positive solution of \autoref{pbm:Tomiyama} leads to an analog of \autoref{prp:mainThm}.
Therefore, \autoref{qst:analog} has a positive answer if the following instance of \autoref{pbm:Tomiyama} has a positive solution:
Given a measure space $\mu$ and a Banach space $Y$, is every contractive projection $\Bdd(L^p(\mu),Y)^{**}\to\Bdd(L^p(\mu),Y)$ automatically a right $\Bdd(L^p(\mu))$-module map?

Consider the space $\ell^\infty$ of bounded sequences.
Since $\ell^\infty$ is a von Neumann algebra, it has a strongly unique isometric predual.
Thus, if \autoref{pbm:Tomiyama} had a positive solution for $X=Y=\ell^\infty$, then $\Bdd(\ell^\infty)$ would have a strongly unique isometric predual.
However, it was noted in \cite[Remark~5.12]{GodSap88DualitySpOpsSmoothNorms} that $\Bdd(\ell^\infty)$ has nonisomorphic isometric preduals.
It follows in particular that there exists a contractive projection $\Bdd(\ell^\infty)^{**}\to\Bdd(\ell^\infty)$ that is not a right $\Bdd(\ell^\infty)$-module map.
\end{rmk}

\section{Preduals involving trace-class operators}
\label{sec:predBS1Y}

Throughout this section $H$ denotes a Hilbert space with $H\neq\{0\}$.
We let $\Cpct(H)$ and $\Schatten_1(H)$ denote the compact and trace-class operators on $H$, respectively.

An operator $a\in\Bdd(H)$ belongs to $\Schatten_1(H)$ if and only if for some (equivalently, every) orthonormal basis $(e_j)_j$ of $H$ the sum $\sum_j \langle |a|e_j,e_j\rangle$ is finite.
Given $a\in\Schatten_1(H)$ and an orthonormal basis $(e_j)_j$ of $H$, the sum $\sum_j\langle ae_j,e_j\rangle$ converges absolutely.
Moreover, it is independent of the choice of a orthonormal basis and we call
\[
\tr(a) := \sum_j\langle ae_j,e_j\rangle
\]
the \emph{trace} of $a$.
We set $\|a\|_1 := \tr(|a|)$.
This defines a norm on $\Schatten_1(H)$, turning the trace-class operators into a Banach space.
Note that $\Schatten_1(H)$ is a (non-closed) two-sided ideal in $\Bdd(H)$.
Moreover, we have $\Schatten_1(H)\subseteq\Cpct(H)$.

\begin{pgr}
\label{pgr:dualTraceClass}
Given $a\in\Bdd(H)$, the map $\Schatten_1(H)\to\CC$, $x\mapsto\tr(ax)$, is a bounded, linear functional on $\Schatten_1(H)$.
This induces an isometric isomorphism $\Bdd(H) \cong \Schatten_1(H)^*$. 
It is also well known that $\Cpct(H)\subseteq\Bdd(H)=\Schatten_1(H)^*$ is an isometric predual of $\Schatten_1(H)$.
Thus, we have isometric isomorphisms
\[
\Cpct(H)^* \cong \Schatten_1(H),\quad\text{ and }\quad \Schatten_1(H)^* \cong \Bdd(H).
\]
\end{pgr}

Let us say that a Banach space $X$ satisfies ($*$) if for every Banach space $E$, every contractive projection $\Bdd(X,E)^{**}\to\Bdd(X,E)$ is automatically a right $\Bdd(X)$-module map.
If $X$ satisfies ($*$), then it follows from \autoref{prp:corrProjYProjBXY} that every contractive projection $\Bdd(X,E)^{**}\to\Bdd(X,E)$ is of the form $\pi_\ast\circ\alpha_{X,E}$ for a unique contractive projection $\pi\colon E^{**}\to E$.
\autoref{prp:condExp} states that Hilbert spaces satisfy ($*$).

\begin{lma}
\label{prp:TomiyamaTensProd}
If two Banach spaces $X$ and $Y$ satisfy ($*$), then so does $X\hat{\otimes}Y$.
\end{lma}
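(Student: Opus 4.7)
The plan is to exploit the canonical isometric isomorphism $\Bdd(X \hat{\otimes} Y, E) \cong \Bdd(X, \Bdd(Y, E))$ (the universal property of the projective tensor product) in order to reduce the question for $X \hat{\otimes} Y$ to two successive applications of ($*$), first for $X$ and then for $Y$.

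Let $E$ be a Banach space and let $\pi \colon \Bdd(X \hat{\otimes} Y, E)^{**} \to \Bdd(X \hat{\otimes} Y, E)$ be a contractive projection. Transporting $\pi$ along the identification above yields a contractive projection $\tilde{\pi} \colon \Bdd(X, \Bdd(Y, E))^{**} \to \Bdd(X, \Bdd(Y, E))$. Since $X$ satisfies ($*$), the map $\tilde{\pi}$ is automatically a right $\Bdd(X)$-module map, so by \autoref{prp:corrProjYProjBXY} there is a unique contractive projection $\sigma \colon \Bdd(Y, E)^{**} \to \Bdd(Y, E)$ with $\tilde{\pi} = r_\sigma$. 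Applying ($*$) for $Y$ to $\sigma$, the projection $\sigma$ is a right $\Bdd(Y)$-module map, and \autoref{prp:corrProjYProjBXY} again produces a unique contractive projection $\tau \colon E^{**} \to E$ with $\sigma = r_\tau$. This identifies the two candidate preduals coming from the two strippings.

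The crux is then to verify that, back on $\Bdd(X \hat{\otimes} Y, E)^{**}$, the identity $\pi = r_\tau$ holds, where $r_\tau$ is now the map associated to $\tau$ via \autoref{pgr:alpha} with $X \hat{\otimes} Y$ in place of the first variable. The identification sends $f \in \Bdd(X \hat{\otimes} Y, E)$ to $\tilde{f}$ with $\tilde{f}(x)(y) = f(x \otimes y)$, so $\ev_{x \otimes y} = \ev_y \circ \ev_x$ under the identification, and hence $\ev_{x \otimes y}^{**} = \ev_y^{**} \circ \ev_x^{**}$. Consequently, for every $F \in \Bdd(X \hat{\otimes} Y, E)^{**}$ and all $x \in X$, $y \in Y$,
\[
\pi(F)(x \otimes y) = \tilde{\pi}(F)(x)(y) = \sigma\bigl(\ev_x^{**}(F)\bigr)(y) = \tau\bigl(\ev_y^{**}(\ev_x^{**}(F))\bigr) = \tau\bigl(\ev_{x \otimes y}^{**}(F)\bigr) = r_\tau(F)(x \otimes y).
\]
Since $\pi(F)$ and $r_\tau(F)$ are bounded operators on $X \hat{\otimes} Y$ agreeing on the dense subspace spanned by elementary tensors, they coincide, so $\pi = r_\tau$. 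As $r_\tau$ is always a right $\Bdd(X \hat{\otimes} Y)$-module map (as noted in \autoref{pgr:alpha}), $\pi$ is as well, proving that $X \hat{\otimes} Y$ satisfies ($*$).

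The main obstacle lies in the compatibility check of the final paragraph: one must ensure that the two successive module-strippings (first over $\Bdd(X)$, then over $\Bdd(Y)$) really do reassemble into the single construction $r_\tau$ over $X \hat{\otimes} Y$. This boils down to the elementary identity $\ev_{x \otimes y} = \ev_y \circ \ev_x$, after which the rest of the argument is bookkeeping with \autoref{prp:corrProjYProjBXY}.
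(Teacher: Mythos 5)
Your proposal is correct and follows essentially the same route as the paper: transport $\pi$ along $\Bdd(X\hat{\otimes}Y,E)\cong\Bdd(X,\Bdd(Y,E))$, strip off $X$ and then $Y$ using ($*$) together with \autoref{prp:corrProjYProjBXY}, and reassemble via the identity $\ev_{x\otimes y}=\ev_y\circ\ev_x$ and density of the span of elementary tensors. No gaps; only the naming of the intermediate projections differs from the paper's write-up.
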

\begin{proof}
Assume that $X$ and $Y$ satisfy ($*$).
Let $E$ be another Banach space, and let $\pi\colon\Bdd(X\hat{\otimes}Y,E)^{**}\to\Bdd(X\hat{\otimes}Y,E)$ be a contractive projection.

Banach spaces form a closed monoidal category for the projective tensor product and with $\Bdd(Y,\freeVar)$ adjoint to $\freeVar\hat{\otimes}Y$.
Thus, there is a natural isometric isomorphism
\[
\Bdd\big( X\hat{\otimes}Y,E \big)
\cong \Bdd\big( X,\Bdd(Y,E) \big).
\]
An operator $f\colon X\hat{\otimes}Y\to E$ is identified with the operator $\tilde{f}\colon X\to\Bdd(Y,E)$ that sends $x\in X$ to the operator $\tilde{f}(x)\colon Y\to E$ given $\tilde{f}(x)(y) := f(x\otimes y)$, for $y\in Y$.

The projection $\pi$ corresponds to a contractive projection $\pi'\colon\Bdd(X,\Bdd(Y,E))^{**}\to\Bdd(X,\Bdd(Y,E))$.
Applying that $X$ satisfies $(*)$ to the projection $\pi'$, there exists a unique contractive projection $\tau\colon\Bdd(Y,E)^{**}\to\Bdd(Y,E)$ such that $\pi'=\tau_*\circ\alpha_{X,\Bdd(Y,E)}$.
The situation is shown in the following diagram:
\[
\xymatrix@C+10pt{
{ \Bdd\big( X\hat{\otimes}Y, E \big)^{**} }
\ar@/^1pc/[d]^{\pi}
\ar@{}[r]|-{\cong}
& { \Bdd\big( X, \Bdd(Y, E) \big)^{**} }
\ar@/^1pc/[d]^{\pi'}
\ar[r]^{\alpha_{X,\Bdd(Y,E)}}
& { \Bdd\big( X, \Bdd(Y,E)^{**} \big) }
\ar[dl]^{\tau_*}
\\
{ \Bdd\big( X\hat{\otimes}Y, E \big) }
\ar@{^{(}->}[u]
\ar@{}[r]|-{\cong}
& { \Bdd\big( X, \Bdd(Y, E) \big) }
\ar@{^{(}->}[u]
& {\makebox[75pt]{}.}
}
\]
Let $F\in\Bdd\big( X\hat{\otimes}Y, E \big)^{**}$, which corresponds to $\tilde{F}\in\Bdd\big( X, \Bdd(Y, E) \big)^{**}$.
Then
\[
\pi(F)(x\otimes y)
= \big[ \pi'(\tilde{F})(x) \big](y)
= \tau\big( \ev_x^{**}(\tilde{F}) \big) (y),
\]
for $x\in X$ and $y\in Y$.

Applying that $Y$ satisfies $(*)$ to the projection $\tau$, there exists a unique contractive projection $\sigma\colon E^{**}\to E$ such that
\[
\tau(G)(y) = \sigma\big( \ev_y^{**}(G) \big),
\]
for every $G\in\Bdd(Y,E)^{**}$ and $y\in Y$.
We claim that $\sigma$ is the desired projection to verify that $X\hat{\otimes}Y$ satisfies $(*)$.

Given $f\in\Bdd(X\hat{\otimes}Y,E)$ with corresponding element $\tilde{f}\in\Bdd(X,\Bdd(Y,E))$, note that $\ev_{x\otimes y}(f)=\ev_y(\ev_x(\tilde{f}))$.
It follows that $\ev_{x\otimes y}^{**}(F)=\ev_y^{**}(\ev_x^{**}(\tilde{F}))$.
Using this at the last step, we deduce that
\[
\pi(F)(x\otimes y)
= \tau\big( \ev_x^{**}(\tilde{F}) \big) (y)
= \sigma\big( \ev_y^{**}(\ev_x^{**}(\tilde{F})) \big)
= \sigma\big( \ev_{x\otimes y}^{**}(F) \big),
\]
for every $x\in X$ and $y\in Y$.
Thus, we have $\pi(F)(t)= \sigma(\ev_t^{**}(F) )$, for every simple tensor $t$ in $X\hat{\otimes}Y$.
It follows from linearity and continuity of the involved maps that the same equation holds for every $t\in X\hat{\otimes}Y$, as desired.
\end{proof}

\begin{lma}
\label{prp:condExpTraceClass}
Let $Y$ be a Banach space.
Then every contractive projection from $\Bdd(\Schatten_1(H),Y)^{**}$ to $\Bdd(\Schatten_1(H),Y)$ is automatically a right $\Bdd(\Schatten_1(H))$-module map.
\end{lma}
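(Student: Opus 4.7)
The plan is to recognize $\Schatten_1(H)$ as a projective tensor product of two Hilbert spaces, so that the lemma reduces to an application of \autoref{prp:TomiyamaTensProd} combined with \autoref{prp:condExp}. What is to be verified is precisely that $\Schatten_1(H)$ satisfies the property $(*)$ introduced just before \autoref{prp:TomiyamaTensProd}.

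Concretely, I would invoke the natural isometric isomorphism $\Schatten_1(H) \cong H \hat{\otimes} H^*$, under which a simple tensor $\xi \otimes \phi$ corresponds to the rank-one operator $\zeta \mapsto \phi(\zeta)\xi$. This is the standard identification of the projective tensor product $X \hat{\otimes} K$ (for $K$ Hilbert) with the nuclear operators from $K^*$ to $X$, specialised to Hilbert spaces, where the nuclear norm coincides isometrically with the trace-class norm (the singular value decomposition furnishes a norm-attaining representation). Since $H$ and $H^*$ are both Hilbert spaces, \autoref{prp:condExp} shows that each satisfies $(*)$, and then \autoref{prp:TomiyamaTensProd} gives that $H \hat{\otimes} H^*$ satisfies $(*)$.

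Finally, I would observe that the property $(*)$ is invariant under isometric isomorphism: any isometric isomorphism $X \cong X'$ induces an isometric isomorphism $\Bdd(X,E) \cong \Bdd(X',E)$ that intertwines the right actions of $\Bdd(X) \cong \Bdd(X')$ by precomposition. Transporting $(*)$ for $H \hat{\otimes} H^*$ along the isomorphism with $\Schatten_1(H)$ therefore yields $(*)$ for $\Schatten_1(H)$, which is exactly the statement of the lemma.

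The main (minor) obstacle is to confirm the initial identification $\Schatten_1(H) \cong H \hat{\otimes} H^*$ is genuinely isometric rather than merely a bounded isomorphism; once that is in hand, the previous lemma and the Hilbert-space case of $(*)$ do all the substantive work.
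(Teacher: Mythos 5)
Your proof is correct and takes essentially the same route as the paper, which identifies $\Schatten_1(H)$ isometrically with $H\hat{\otimes}H$ (equivalent to your $H\hat{\otimes}H^*$, since $H^*$ is again a Hilbert space) and then applies \autoref{prp:TomiyamaTensProd} together with the fact that Hilbert spaces satisfy $(*)$ by \autoref{prp:condExp}. Your additional remarks on the isometry of the identification and the invariance of $(*)$ under isometric isomorphism are correct and merely make explicit what the paper leaves implicit.
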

\begin{proof}
It is well known that the trace-class operators on $H$ are isometrically isomorphic to $H\hat{\otimes} H$.
Therefore, the statement follows from \autoref{prp:TomiyamaTensProd}.
\end{proof}

Using \autoref{prp:condExpTraceClass} instead of \autoref{prp:condExp}, we obtain the analog of \autoref{prp:mainThm} with the space of trace-class operators in place of the Hilbert space.

\begin{prp}
\label{prp:mainThmTraceClass}
Let $Y$ be a Banach space.
Assigning to a projection $\pi\colon Y^{**}\to Y$ the projection $r_\pi\colon\Bdd(\Schatten_1,Y)^{**}\to\Bdd(\Schatten_1,Y)$, as in \autoref{prp:corrProjYProjBXY}, established a natural bijection between the following classes.
\begin{enumerate}
\item
Contractive projections $Y^{**}\to Y$;
\item
Contractive projections $\Bdd(\Schatten_1(H),Y)^{**}\to\Bdd(\Schatten_1(H),Y)$.
\end{enumerate}

Restricted to projections with \weakStar{} closed kernel, we obtain a natural bijection between isometric preduals of $Y$ and isometric preduals of $\Bdd(\Schatten_1(H),Y)$.
\end{prp}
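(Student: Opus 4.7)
The plan is to derive this as a direct consequence of the two ingredients assembled above, in complete parallel to the proof of \autoref{prp:mainThm}. I would combine \autoref{prp:corrProjYProjBXY} (applied with $X=\Schatten_1(H)$) with \autoref{prp:condExpTraceClass}.

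First, I would invoke \autoref{prp:corrProjYProjBXY} with $X=\Schatten_1(H)$ (which is nonzero since $H\neq\{0\}$). This already supplies a natural bijection between contractive projections $\pi\colon Y^{**}\to Y$ and contractive, right $\Bdd(\Schatten_1(H))$-module projections $\Bdd(\Schatten_1(H),Y)^{**}\to\Bdd(\Schatten_1(H),Y)$, via $\pi\mapsto r_\pi=\pi_*\circ\alpha_{\Schatten_1(H),Y}$. Moreover, the same theorem guarantees that the kernel of $\pi$ is \weakStar{} closed exactly when the kernel of $r_\pi$ is.

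Next, I would use \autoref{prp:condExpTraceClass} to drop the module-map hypothesis on the right-hand side: every contractive projection $\Bdd(\Schatten_1(H),Y)^{**}\to\Bdd(\Schatten_1(H),Y)$ is automatically a right $\Bdd(\Schatten_1(H))$-module map. Consequently, the class of contractive, right $\Bdd(\Schatten_1(H))$-module projections appearing on the right-hand side of \autoref{prp:corrProjYProjBXY} coincides with the class of all contractive projections $\Bdd(\Schatten_1(H),Y)^{**}\to\Bdd(\Schatten_1(H),Y)$. This immediately yields the bijection between (1) and (2).

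Finally, restricting to projections with \weakStar{} closed kernel on both sides, and using the standard correspondence between such projections and isometric preduals (as recalled before \autoref{prp:corrProjYProjBXY}), gives the natural bijection between isometric preduals of $Y$ and isometric preduals of $\Bdd(\Schatten_1(H),Y)$. There is no real obstacle here: the entire content of the proof has been absorbed into \autoref{prp:TomiyamaTensProd} and \autoref{prp:condExpTraceClass}, so the present proposition is essentially a one-line assembly of those two facts with \autoref{prp:corrProjYProjBXY}.
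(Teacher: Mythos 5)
Your proposal is correct and matches the paper's argument exactly: the paper obtains this proposition by combining \autoref{prp:corrProjYProjBXY} (with $X=\Schatten_1(H)$) and \autoref{prp:condExpTraceClass}, precisely as in the proof of \autoref{prp:mainThm}. No gaps.
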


\begin{cor}
\label{prp:predTraceClassWCts}
Let $Y$ be a Banach space, and let $F\subseteq\Bdd(\Schatten_1(H),Y)^*$ be an isometric predual.
Then for each $a\in\Bdd(\Schatten_1(H))$, the right action of $a$ on $\Bdd(\Schatten_1(H),Y)$ is continuous for the \weakStar{} topology induced by $F$.
\end{cor}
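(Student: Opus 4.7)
The plan is to combine \autoref{prp:condExpTraceClass} with the standard correspondence between isometric preduals of $\Bdd(\Schatten_1(H),Y)$ and contractive projections onto it from its bidual. This mirrors exactly the two-step strategy that derives \autoref{prp:predHilbWCts} from \autoref{prp:condExp}, but now with $\Schatten_1(H)$ in place of the Hilbert space.

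More concretely, I would first associate to the predual $F$ the unique contractive projection $\pi\colon\Bdd(\Schatten_1(H),Y)^{**}\to\Bdd(\Schatten_1(H),Y)$ whose kernel is the \weakStar{} closed annihilator of $F$; this uses the standard bijection between isometric preduals and such projections, recorded in the discussion preceding \autoref{prp:corrProjYProjBXY}. By \autoref{prp:condExpTraceClass}, every such $\pi$ is automatically a right $\Bdd(\Schatten_1(H))$-module map. Finally I would invoke \cite[Proposition~B.6]{GarThi16arX:PredualsBXY} (also cited in the paragraph before \autoref{prp:corrProjYProjBXY}), which says that a predual of $\Bdd(X,Y)$ makes the right action of each $a\in\Bdd(X)$ \weakStar{} continuous if and only if the associated projection is a right $\Bdd(X)$-module map. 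Applying this with $X=\Schatten_1(H)$ yields the desired continuity.

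I do not anticipate any genuine obstacle: the substantive content is already contained in \autoref{prp:condExpTraceClass} (itself reduced via \autoref{prp:TomiyamaTensProd} to the Hilbert space case of \autoref{prp:condExp}), and the present corollary is a formal consequence obtained by chaining the two results above. If one wanted a self-contained phrasing, one could unfold \cite[Proposition~B.6]{GarThi16arX:PredualsBXY} directly: since the module-map property of $\pi$ means $\pi(Fa)=\pi(F)a$ on the bidual, transposing to $F\subseteq\Bdd(\Schatten_1(H),Y)^*$ shows that the left action of $a\in\Bdd(\Schatten_1(H))$ leaves $F$ invariant, which is precisely the \weakStar{} continuity of the right action of $a$ on $\Bdd(\Schatten_1(H),Y)$.
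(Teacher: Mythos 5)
Your proposal is correct and follows essentially the same route the paper intends: the corollary is a formal consequence of \autoref{prp:condExpTraceClass} (equivalently \autoref{prp:mainThmTraceClass}) combined with the bijection between isometric preduals and contractive projections with \weakStar{} closed kernel, together with \cite[Proposition~B.6]{GarThi16arX:PredualsBXY}. Only a cosmetic remark: in your last paragraph the letter $F$ is used both for the predual and for a generic element of the bidual, which you should disambiguate.
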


\begin{rmk}
Using \autoref{prp:TomiyamaTensProd} inductively, \autoref{prp:mainThmTraceClass} and \autoref{prp:predTraceClassWCts} hold for any projective tensor power $H\hat{\otimes}\ldots\hat{\otimes}H$ in place of $\Schatten_1(H)$.
\end{rmk}

\begin{rmk}
We have isometric isomorphisms
\[
\Bdd(\Schatten_1(H))
\cong \Bdd\big( \Schatten_1(H), \Cpct(H)^* \big)
\cong \left( \Schatten_1(H)\hat{\otimes}\Cpct(H) \right)^*,
\]
and we consider $\Schatten_1(H)\hat{\otimes}\Cpct(H)$ as the `standard' predual of $\Bdd(\Schatten_1(H))$.
Right multiplication by an element from $\Bdd(\Schatten_1(H))$ is continuous for the induced \weakStar{} topology;
see \autoref{prp:predTraceClassWCts}.
However, this is not the case for left multiplication.
Indeed, a Banach space $X$ is reflexive if and only if left multiplication by elements from $\Bdd(X)$ is \weakStar{} continuous (for any predual);
see \cite[Corollary~6.4]{GarThi16arX:PredualsBXY}.
\end{rmk}

\begin{exa}
\label{exa:predualBS1}
By \autoref{prp:mainThmTraceClass}, there is a natural correspondence between isometric preduals of $\Bdd(\Schatten_1(H))$ and isometric preduals of $\Schatten_1(H)$.
The compact operators on $H$ form the canonical isometric predual of $\Schatten_1(H)$.

However, if $H=\ell_2$, then $\Schatten_1(H)$ has also many other isometric preduals.
Indeed, the diagonal operators in $\Schatten_1(H)$ form an isometric copy of $\ell_1$ that is closed for the `standard' \weakStar{} topology induced by the compact operators.
Since $\ell_1$ does not have a strongly unique isometric predual, neither does $\Schatten_1(H)$.
\end{exa}

\begin{thm}
\label{prp:predualTraceClass}
The `standard' predual $\Cpct(H)$ of $\Schatten_1(H)$ makes multiplication in $\Schatten_1(H)$ separately \weakStar{} continuous.
Moreover, $\Cpct(H)$ is the only such predual:
If $F\subseteq \Schatten_1(H)^*=\Bdd(H)$ is a (not necessarily isometric) predual that makes multiplication in $\Schatten_1(H)$ separately \weakStar{} continuous, then $F=\Cpct(H)$.
In particular, every predual of $\Schatten_1(H)$ making multiplication separately \weakStar{} continuous is automatically an isometric predual.
\end{thm}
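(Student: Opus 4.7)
\emph{The plan.} I would split the proof into two halves: verifying that $\Cpct(H)$ works, and showing uniqueness.

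For the first half, use that $\Schatten_1(H)\subseteq\Cpct(H)$ and that $\Cpct(H)$ is a two-sided ideal in $\Bdd(H)$, so $a\phi,\phi a\in\Cpct(H)$ whenever $a\in\Schatten_1(H)$ and $\phi\in\Cpct(H)$. Dualizing the trace identity $\tr((xa)\phi)=\tr(x(a\phi))$ (and its left-sided mirror) then shows that the preduals of right and left multiplication by $a$ preserve $\Cpct(H)$, which is equivalent to \weakStar{} continuity.

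For uniqueness, let $F\subseteq\Bdd(H)$ be a (not necessarily isometric) predual of $\Schatten_1(H)$ for which multiplication is separately \weakStar{} continuous. My first step is to translate continuity into algebra: since a functional on $\Schatten_1(H)\cong F^*$ is \weakStar{} continuous precisely when it is represented by an element of $F$, the same dualization forces $a\phi\in F$ and $\phi a\in F$ for all $a\in\Schatten_1(H)$ and $\phi\in F$. In other words, $F$ must be a two-sided $\Schatten_1(H)$-submodule of $\Bdd(H)$. Once this is in hand, I would harvest rank-one operators: picking any nonzero $\phi\in F$ and unit vectors $\xi,\eta\in H$ with $\langle\phi\xi,\eta\rangle\neq 0$, sandwiching $\phi$ between the trace-class rank-one projections onto $\CC\xi$ and $\CC\eta$ yields a nonzero multiple of $\eta\otimes\xi$, hence $\eta\otimes\xi\in F$. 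Further left/right multiplication by rank-one trace-class operators produces every rank-one operator in $F$, and norm closure gives $\Cpct(H)\subseteq F$.

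The hard --- or at least decisive --- step is the reverse inclusion $F\subseteq\Cpct(H)$, which I would do by a short duality argument. The dual of $F/\Cpct(H)$ is the annihilator $\Cpct(H)^\perp\subseteq F^*$; under the identification $F^*\cong\Schatten_1(H)$ provided by the predual structure, this annihilator becomes $\{x\in\Schatten_1(H):\tr(x\phi)=0\text{ for all }\phi\in\Cpct(H)\}$, which is $\{0\}$ because $\Cpct(H)$ is itself a predual of $\Schatten_1(H)$. Thus $(F/\Cpct(H))^*=0$, so Hahn--Banach forces $F=\Cpct(H)$; the automatic isometry then follows because $\Cpct(H)$ is the canonical isometric predual. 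The main obstacle is conceptual rather than computational: setting up the module characterization so that the annihilator argument lands in the right place --- once that is done, everything else is routine.
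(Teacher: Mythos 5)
Your proof is correct and follows the same skeleton as the paper's: both reduce separate \weakStar{} continuity to the statement that $F$ is a two-sided $\Schatten_1(H)$-submodule of $\Bdd(H)$ (the paper quotes \cite[Corollary~B.7]{GarThi16arX:PredualsBXY} for this equivalence and computes the dual module actions $a\varphi_b=\varphi_{ab}$, $\varphi_b a=\varphi_{ba}$ explicitly), both then prove $\Cpct(H)\subseteq F$, and both conclude $F=\Cpct(H)$ from the fact that one predual cannot properly contain another. The two places where you diverge are worth recording. For the inclusion $\Cpct(H)\subseteq F$, the paper shows that $F\cap\Cpct(H)$ is a closed two-sided ideal of $\Cpct(H)$ --- approximating $ab$ in norm by $ap_Db$ with $p_D$ finite-rank projections, so that $p_Db\in\Schatten_1(H)$ and the submodule property applies --- and then invokes the simplicity of $\Cpct(H)$ together with the (unproved) remark that $F\cap\Cpct(H)\neq\{0\}$. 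Your rank-one harvesting is more elementary and self-contained: sandwiching a nonzero $\phi\in F$ between rank-one trace-class projections puts a nonzero rank-one operator in $F$, further multiplications generate all rank-one operators, and linearity plus norm-closedness give $\Cpct(H)\subseteq F$; this bypasses the simplicity of $\Cpct(H)$ and simultaneously supplies the nontriviality claim the paper calls ``easy to see''. For the final step, the paper simply asserts that two nested preduals coincide; your annihilator/Hahn--Banach argument (the dual of $F/\Cpct(H)$ is the annihilator of $\Cpct(H)$ in $F^*\cong\Schatten_1(H)$, which vanishes because the trace pairing against compacts separates points of $\Schatten_1(H)$) is precisely the standard proof of that assertion, so you are making explicit what the paper leaves implicit. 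Both versions are complete; yours trades two citations/black boxes for short direct computations.
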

\begin{proof}
Given $a\in\Bdd(H)$, we let $\varphi_a$ denote the functional on $\Schatten_1(H)$ given by $\langle\varphi_a,x\rangle := \tr(ax)$, for $x\in \Schatten_1(H)$.
This identifies $\Schatten_1(H)^*$ with $\Bdd(H)$.

The multiplication on $\Schatten_1(H)$ induces a $\Schatten_1(H)$-bimodule structure on its dual;
see \cite[Paragraph~A.3]{GarThi16arX:PredualsBXY}.
Let us recall some details.
Given $a\in \Schatten_1(H)$, let $L_a,R_a\colon \Schatten_1(H)\to \Schatten_1(H)$ be given by $L_a(x):=ax$ and $R_a(x):=xa$, for $x\in \Schatten_1(H)$.
Then the left action of $a$ on $\Schatten_1(H)^*$ is given by $R_a^*$, and the right action is given by $L_a^*$.
Thus, given $a\in \Schatten_1(H)$ and $b\in\Bdd(H)$, we have
\[
\langle \varphi_b a, x \rangle
=  \langle L_a^*(\varphi_b), x \rangle
=  \langle \varphi_b, ax \rangle
= \tr(bax)
= \langle \varphi_{ba}, x \rangle,
\]
for $x\in \Schatten_1(H)$, and therefore $\varphi_ba=\varphi_{ba}$.
Similarly, we obtain $a\varphi_b=\varphi_{ab}$.

Let $F\subseteq \Schatten_1(H)^*=\Bdd(H)$ be a predual.
Then left (right) multiplication on $\Schatten_1(H)$ is $\sigma(\Schatten_1(H),F)$-continuous if and only if $F$ is a right (left) $\Schatten_1(H)$-submodule of $\Bdd(H)$;
see \cite[Corollary~B.7]{GarThi16arX:PredualsBXY}.
Given $a\in\Cpct(H)$ and $b\in \Schatten_1(H)$, we have $ab,ba\in\Cpct(H)$.
Thus, the predual $\Cpct(H)$ is a $\Schatten_1(H)$-sub-bimodule of $\Bdd(H)$, which shows that it makes multiplication in $\Schatten_1(H)$ separately \weakStar{} continuous.

Conversely, let $F\subseteq \Schatten_1(H)^*=\Bdd(H)$ be a predual making multiplication in $\Schatten_1(H)$ separately \weakStar{} continuous.
Then $F$ is invariant under the left and right action of $\Schatten_1(H)$ on $\Bdd(H)$.
We have shown above that the left (right) action of $a\in \Schatten_1(H)$ on $\Bdd(H)$ is simply given by right (left) multiplication with $a$.

Claim: The set $F\cap\Cpct(H)$ is a closed, two-sided ideal in $\Cpct(H)$.
To verify the claim, let $a\in F\cap\Cpct(H)$, and let $b\in\Cpct(H)$.
Given a finite-dimensional subspace $D\subseteq H$, let $p_D$ be the orthogonal projection onto $D$.
We order the finite-dimensional subspaces of $H$ by inclusion.
Since $b\in\Cpct(H)$, we have $\lim_D \|p_D b - b \|=0$ and therefore
\[
\lim_D \|ap_Db - ab\| = 0.
\]
For each $D$, we have $p_Db\in \Schatten_1(H)$.
Since $F$ is invariant under right multiplication by $\Schatten_1(H)$, it follows that $ap_Db\in F\cap\Cpct(H)$.
Since $F$ is norm-closed, we deduce that $ab\in F\cap\Cpct(H)$.
Analogously, one shows that $F\cap\Cpct(H)$ is a left ideal in $\Cpct(H)$, which proves the claim.

The only closed, two-sided ideals of $\Cpct(H)$ are $\{0\}$ and $\Cpct(H)$.
It is easy to see that $F\cap\Cpct(H)\neq\{0\}$.
Thus, $\Cpct(H)\subseteq F$.
Since both $\Cpct(H)$ and $F$ are preduals of $\Schatten_1(H)$, it follows that $\Cpct(H)=F$, as desired.
\end{proof}

\begin{cor}
Let $\Schatten_1(H)$ be the trace-class operators on a Hilbert space $H$.
Then every Banach algebra isomorphism $\Schatten_1(H)\to \Schatten_1(H)$ is \weakStar{} continuous (for the `standard' predual $\Cpct(H)$.)
\end{cor}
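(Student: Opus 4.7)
The plan is to transport the canonical predual $\Cpct(H)$ through the isomorphism $\varphi$ and invoke the uniqueness statement of \autoref{prp:predualTraceClass}. Let $\varphi\colon\Schatten_1(H)\to\Schatten_1(H)$ be a Banach algebra isomorphism; by the open mapping theorem $\varphi$ is a Banach space isomorphism, so its transpose $\varphi^*\colon\Bdd(H)\to\Bdd(H)$ (via $\Schatten_1(H)^*\cong\Bdd(H)$ from \autoref{pgr:dualTraceClass}) is also an isomorphism. Note that \weakStar{} continuity of $\varphi$ with respect to the predual $\Cpct(H)$ on source and target is equivalent to the inclusion $\varphi^*(\Cpct(H))\subseteq\Cpct(H)$. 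I would therefore set $F:=\varphi^*(\Cpct(H))\subseteq\Bdd(H)$, which is a closed subspace, and aim to prove $F=\Cpct(H)$ by verifying the hypotheses of \autoref{prp:predualTraceClass}.

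First I would check that $F$ is a (not necessarily isometric) predual of $\Schatten_1(H)$. The map $\varphi^*$ restricts to an isomorphism $\Cpct(H)\to F$, whose dual gives an isomorphism $F^*\to\Cpct(H)^*=\Schatten_1(H)$; unwinding the definitions shows that this dual isomorphism, precomposed with the canonical restriction map $\Schatten_1(H)\to F^*$, is exactly $\varphi$. Since $\varphi$ is an isomorphism, so is the restriction map.

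Next I would show that $F$ makes multiplication in $\Schatten_1(H)$ separately \weakStar{} continuous. As in the proof of \autoref{prp:predualTraceClass}, this reduces to showing that $F$ is an $\Schatten_1(H)$-sub-bimodule of $\Bdd(H)$, where the left and right actions of $\Schatten_1(H)$ on $\Bdd(H)=\Schatten_1(H)^*$ correspond, under the identification $\varphi_c\leftrightarrow c$, to ordinary left and right multiplication in $\Bdd(H)$. Using multiplicativity of $\varphi$ and cyclicity of the trace, a direct computation gives, for $a\in\Schatten_1(H)$ and $b\in\Cpct(H)$,
\[
a\cdot\varphi^*(b)=\varphi^*(\varphi(a)b),\qquad \varphi^*(b)\cdot a=\varphi^*(b\varphi(a)).
\]
Since $\varphi(a)\in\Schatten_1(H)\subseteq\Cpct(H)$ and $\Cpct(H)$ is a two-sided ideal in $\Bdd(H)$, both $\varphi(a)b$ and $b\varphi(a)$ lie in $\Cpct(H)$, so $F$ is closed under both actions.

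With both hypotheses in hand, \autoref{prp:predualTraceClass} forces $F=\Cpct(H)$, which is exactly $\varphi^*(\Cpct(H))=\Cpct(H)$, giving the desired \weakStar{} continuity of $\varphi$. There is no serious obstacle: the only conceptual step is choosing the right predual to transport, after which the two verifications are essentially routine, and the real work has already been done by \autoref{prp:predualTraceClass}.
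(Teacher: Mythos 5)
Your proposal is correct and is exactly the intended derivation: the paper gives no explicit proof, but the corollary is meant to follow from \autoref{prp:predualTraceClass} precisely by transporting the predual $\Cpct(H)$ along $\varphi^*$, checking that the image is again a predual making multiplication separately \weakStar{} continuous, and invoking uniqueness to conclude $\varphi^*(\Cpct(H))=\Cpct(H)$. Your two verifications (that $F=\varphi^*(\Cpct(H))$ is a predual, and that it is an $\Schatten_1(H)$-sub-bimodule via $a\cdot\varphi^*(b)=\varphi^*(\varphi(a)b)$ and $\varphi^*(b)\cdot a=\varphi^*(b\varphi(a))$) are accurate.
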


\begin{rmk}
A \emph{dual Banach algebra} is a Banach algebra $A$ together with a predual $F\subseteq A^*$ making the multiplication in $A$ separately \weakStar{} continuous.
This concept was introduced by Runde, \cite[Definition~4.4.1, p.108]{Run02BookAmen}, and extensively studied by Daws;
see \cite{Daw11BicommDualBAlg} and the references therein.
\autoref{prp:predualTraceClass} states that the trace-class operators with their `standard' predual of compact operators form a dual Banach algebra.
Moreover, the compact operators are the only predual making the trace-class operators into a dual Banach algebra.
\end{rmk}


\providecommand{\bysame}{\leavevmode\hbox to3em{\hrulefill}\thinspace}
\providecommand{\noopsort}[1]{}
\providecommand{\mr}[1]{\href{http://www.ams.org/mathscinet-getitem?mr=#1}{MR~#1}}
\providecommand{\zbl}[1]{\href{http://www.zentralblatt-math.org/zmath/en/search/?q=an:#1}{Zbl~#1}}
\providecommand{\jfm}[1]{\href{http://www.emis.de/cgi-bin/JFM-item?#1}{JFM~#1}}
\providecommand{\arxiv}[1]{\href{http://www.arxiv.org/abs/#1}{arXiv~#1}}
\providecommand{\doi}[1]{\url{http://dx.doi.org/#1}}
\providecommand{\MR}{\relax\ifhmode\unskip\space\fi MR }
\providecommand{\MRhref}[2]{%
  \href{http://www.ams.org/mathscinet-getitem?mr=#1}{#2}
}
\providecommand{\href}[2]{#2}

\end{document}